\documentclass[leqno]{article}
\usepackage{geometry}
\usepackage{graphicx}	
\usepackage[utf8]{inputenc}
\usepackage[T1]{fontenc}
\usepackage[english]{babel}
\usepackage{mathtools}
\usepackage{amsfonts,amssymb,amsmath,amsthm}

\usepackage[colorinlistoftodos]{todonotes}

\usepackage{url}
\usepackage{parskip}
\ifpdf
\usepackage{hyperref}
\else
\usepackage[dvipdfmx]{hyperref}
\fi

\usepackage{stmaryrd}

\usepackage[backend=biber,style=alphabetic]{biblatex}
\usepackage{etoolbox}
\pdfpageheight=\textheight
\pdfpagewidth=\textwidth
\hypersetup{colorlinks=true,pdfencoding=Auto}
\def\thtext#1{
  \catcode`@=11
  \gdef\@thmcountersep{. #1}
  \catcode`@=12
}

\def\threst{
  \catcode`@=11
  \gdef\@thmcountersep{.}
  \catcode`@=12
}

\theoremstyle{plain}

\newtheorem{theorem}{Theorem}[section]
\newtheorem{proposition}[theorem]{Proposition}
\newtheorem{corol}[theorem]{Corollary}

\newtheorem{lemma}[theorem]{Lemma}

\theoremstyle{definition}
\newtheorem{define}[theorem]{Definition}
\newtheorem{comm}[theorem]{Remark}

\newtheorem{example}[theorem]{Example}

 \catcode`@=11
 \def\.{.\spacefactor\@m}
 \catcode`@=12

\def\N{{\mathbb N}}

\def\R{\mathbb R}

\def\S{\Sigma}

\def\eps{\varepsilon}

\def\0{\emptyset}
\def\:{\colon}
\def\<{\langle}
\def\>{\rangle}
\def\[{\llbracket}
\def\]{\rrbracket}

\def\c{\circ}

\def\rom#1{\emph{#1}}
\def\({\rom(}
\def\){\rom)}

\def\torllim{\operatornamewithlimits{\rightleftarrows}\limits}

\def\x{\times}

\def\FF{\mathcal{F}}
\def\OO{\mathcal{O}}
\def\RR{\mathcal{R}}

\newcommand{\dGH}[1]{\operatorname{d}^{#1}_{\mathrm{GH}}}

\def\eps{{\varepsilon}}

\def\codis{\operatorname{codis}}

\def\diam{\operatorname{diam}}

\def\dis{\operatorname{dis}}

\def\GH{\operatorname{\mathcal{G\!H}}}

\def\id{\operatorname{id}}

\def\codis{\operatorname{codis}}
\def\supp{\operatorname{supp}}

\newcommand{\lowlim}[1]{\underset{#1}{\lim}}

\newcommand{\lowsup}[1]{\underset{#1}{\sup}}

\newcommand{\dist}[1]{\operatorname{d}_{#1}}
\newcommand{\di}[2]{\operatorname{d}_{#1}^{#2}}

\def\vy{\tilde{y}}

\newcommand{\va}{\tilde{a}}
\newcommand{\vx}{\tilde{x}}

\newcommand{\vb}{\tilde{b}}
\newcommand{\vA}{\tilde{A}}
\newcommand{\vR}{\tilde{R}}
\newcommand{\vB}{\tilde{B}}

\begin{document}
\title{Modifications of the Continuous Gromov--Hausdorff Distance}
\author{A.A.~Vikhrov}
\date{}
\maketitle
\tableofcontents

\begin{abstract}
This work studies modifications of the Gromov--Hausdorff distance in which the infimum is taken over pairs of morphisms of a given class $\FF$ --- the class of morphisms of a subcategory of the category of metric spaces (the $\FF$-Gromov--Hausdorff distance). We prove that the $\FF$-distance depends Lipschitz-continuously on the class $\FF$ in the Hausdorff metric on morphisms. As a consequence, for second countable $C^k$-manifolds equipped with a metric compatible with the topology, the smooth distance coincides with the continuous one: $\dGH{C_k} = \dGH{C}$. We also introduce the partially continuous and partially locally constant distances (continuity is weakened to continuity on an open subset of the support of full measure) and prove that on the class of metric spaces they coincide with the classical Gromov--Hausdorff distance.
\end{abstract}
\footnotetext{The work was supported by the Basis Foundation (grant No.~25-8-3-11-1) and the Russian Science Foundation (project No.~25-21-00152).}

\section{Introduction}

A symmetric map $d\: X \times X \to [0,\infty]$ that vanishes on the diagonal and satisfies the triangle inequality is called a \emph{generalized pseudometric}.
A generalized pseudometric that takes no infinite values is called a \emph{pseudometric}.
A generalized pseudometric that vanishes only on the diagonal is called a \emph{generalized metric}, and a pseudometric with this property is called a \emph{metric}.
The Gromov--Hausdorff distance (which we abbreviate to GH) measures how different two metric spaces are.
It was introduced independently by D.~Edwards~\cite{Edwards1975} in 1975 and by M.~Gromov~\cite{Gromov1981, Gromov1981SM} in 1981. This value is the greatest lower bound of the Hausdorff distances between the images of isometric embeddings of the two spaces into all possible ambient spaces.
An equivalent definition via the distortion of correspondences and the properties of this distance are described in detail in~\cite{BurBurIva} (see Section~\ref{sec: preliminaries} below).
The classical Gromov--Hausdorff distance does not take the topology into account: spaces that are topologically different but metrically close can be at a small or even zero distance from each other.

Usually the Gromov--Hausdorff distance is considered on the set of compact metric spaces. We consider it on the class of all metric spaces, so we work in the von~Neumann--Bernays--G\"odel axiom system, where proper classes, which generalize the notion of a set, are allowed in addition to sets.
The proper class of all metric spaces considered up to isometry is denoted by $\GH$; its structure as a proper class is described in detail in~\cite{BogatyiTuzhilinGHClass}.
We treat it as a class of representatives: exactly one space with a fixed metric is chosen from each isometry class of metric spaces.
The notion of a generalized pseudometric is naturally defined on this proper class.

Instead of arbitrary correspondences in the formula of Proposition~\ref{prop: gh via maps}, one can take pairs of maps $f\: X \to Y$ and $g\: Y \to X$ that generate the correspondence $R_{f,g}$.
The \emph{continuous Gromov--Hausdorff distance} $\dGH{C}$ is the modification in which these maps are required to be continuous.
This construction can be generalized: take a subcategory of the category of metric spaces and take the infimum over pairs of its morphisms. The resulting $\FF$-distance $\dGH{\FF}$ ($\FF$ is the class of morphisms of the subcategory) is considered in Section~\ref{sec: f-dist}.
Monotonicity in the class of morphisms (Proposition~\ref{prop:FFmonotone}) implies, in particular, that $\dGH{\FF} \geq \dGH{}$.

The continuous distance was introduced by Lim, M\'emoli and Smith~\cite{LimMemoliSmith} in their comparison of spheres: for the unit spheres $\mathbb{S}^m$ and $\mathbb{S}^n$ with $1 \leq m < n < \infty$ it turned out to be $\pi/2$, while the classical distance is strictly smaller.
Another version, which does not satisfy the triangle inequality, was proposed by Lee and Morales~\cite{LeeMorales2022} for dynamical systems and partial differential equations.
Bogatyi and Tuzhilin~\cite{TuzCont} developed the general theory of the continuous distance (in the Lim--M\'emoli--Smith version): in particular, they proved the triangle inequality and showed that this distance is intrinsic, like the classical one, but incomplete, unlike the classical one.
The generalization to set-valued maps and semicontinuous correspondences is developed in~\cite{SemenovTuzhContGH}.

The difference between the continuous and the classical distances suggests that a small extension of the class of continuous maps can make the corresponding distance equal to the classical GH-distance: for example, one can achieve this by allowing a map to have finitely many points of discontinuity.

As an example, consider the two-point simplex $\Delta_2 = \{a, b\}$ (the only non-zero distance is~1, see the precise definition in Section~\ref{sec: preliminaries}) and the segment $[0,\, 2]$.
Every continuous map $g\: [0,\, 2] \to \Delta_2$ is constant, so $\dis g = \diam[0,\, 2] = 2$, and hence $\dGH{C}(\Delta_2,\, [0,\, 2]) \geq 1$.
If we allow one point of discontinuity, then the corresponding modified distance coincides with the classical one, $\dGH{}(\Delta_2,\, [0,\, 2])$, which is equal to $1/2$ (see~\cite[Thm.~2.12]{GrigorIvanTuzSimplex} for details).
Thus $\dGH{}(\Delta_2,\, [0,\, 2]) = 1/2$, while $\dGH{C}(\Delta_2,\, [0,\, 2]) = 1$.
So for this pair one point of discontinuity is enough to recover the classical distance.
Moreover, for any $|I| \ge 2$ the vertices of the cube ${[0,\, 1]}^I$ and its skeleton have $\dGH{C, \alpha}$ strictly larger than the classical distance for $\alpha = 0$, and, if $I$ is infinite, for all $\alpha < 2^{|I|}$ (see Example~\ref{example: cube skeleton}).

We impose conditions on the maps not on the whole space, but only on a subset of full measure of the support.
First we weaken the continuity requirement in this way: we require the map to be continuous on a subset of full measure of the support that is open in the topology induced on the support.
Then we require the same for local constancy: the map must be locally constant on an open subset of the support of full measure.
This gives the \emph{partially continuous} distance $\dGH{aec}$ and the \emph{partially locally constant} distance $\dGH{aeconst}$ on the class $\GH_{mm}$ of Gromov metric triples $(X,\, d,\, \mu)$, where $\mu$ is an arbitrary boundedly finite Borel measure. The corresponding distances on the class of ordinary metric spaces are defined through them (see Section~\ref{sec: partially smooth}).
The distances $\dGH{aec}$ and $\dGH{aeconst}$ are weaker than $\dGH{C}$, and in this work we show that they coincide with the classical distance $\dGH{}$ (Theorem~\ref{theorem: aec equal original}).

This work also considers restricting the class of continuous maps to smooth ones: for manifolds with a smooth structure it is natural to define the \emph{smooth Gromov--Hausdorff distance} $\dGH{C_k}$ --- the infimum over $C^k$-smooth maps.
It turns out that the smooth structure does not give a finer distinction than the continuous one.

Putting the results together, we obtain the hierarchy
$$
  \dGH{} = \dGH{aec} = \dGH{aeconst} \leq \dGH{C} = \dGH{C_k},
$$
where on the left is the weakest distance, the classical one, which does not take the topology into account; next come the measure-controlled partially continuous and partially locally constant distances, which coincide with it; on the right are the stronger distances that take the topology and smoothness into account, the continuous and the smooth ones, which also coincide with each other.

The author is grateful to his scientific advisor Prof.~A.A.~Tuzhilin and to Prof.~A.O.~Ivanov for formulating the problem and their constant attention to the work.

\section{Preliminaries}\label{sec: preliminaries}

We first introduce the necessary notation.
The distance between points $x$ and $y$ is denoted by $|xy|$, $d(x,y)$ or $\rho(x,y)$.
The open ball with center $x$ and radius $r$ is denoted by $U_r(x)$ or $U(x,r)$, and the closed one by $B_r(x)$ or $B(x,r)$.

\begin{define}
  Let $A,B$ be non-empty subsets of a metric space $X$.
The \emph{Hausdorff distance} is the quantity
  \begin{equation*}
      \di{H}{X}(A, B) = \inf \Bigl\{r\: A \subset U_r(B)\, \&\, B \subset U_r (A)\Bigr\}.
  \end{equation*}
\end{define}

\begin{define}
  Let $A,B,X$ be metric spaces.
If $A$ is isometric to $\vA$ and $B$ is isometric to $\vB$, where $\vA$ and $\vB$ are subspaces of $X$, then the triple $(\vA, \vB, X)$ is called a \emph{realization of the pair} $(A, B)$.
\end{define}

\begin{define}\label{def: gh-realization}
  The \emph{Gromov--Hausdorff distance} between two metric spaces $A$, $B$ is the infimum of the Hausdorff distances over all realizations of the pair $(A,B)$.
In other words,
  \begin{equation*}
      \dGH{}(A,B) = \inf\bigl\{r: \text{there exists a realization } (\vA,\vB,X) \text{ of the pair $(A,B)$ with }\di{H}{X} (\vA,\vB) \leq r \bigr\}.
  \end{equation*}
\end{define}
As stated in the introduction, the class $\GH$ consists of representatives: one space with a fixed metric is chosen from each isometry class of metric spaces. The notation $X \in \GH$ means such a particular metric space, not an isometry class.
We now give another, simpler definition of the Gromov--Hausdorff distance.
\begin{define}
  Let $X$, $Y$ be metric spaces and let $\sigma \subseteq X \x Y$.
Its \emph{distortion} is defined by
  $$
  \dis \sigma = \sup \Bigl\{\bigl|\dist{X}(a,a')-\dist{Y}(b,b')\bigr|\:\, (a, b) \text{ and } (a', b') \in \sigma \Bigr\}.
  $$
  It is easy to see that if $\sigma'\subseteq\sigma$, then $\dis\sigma'\le\dis\sigma$, since the $\sup$ over a subset does not exceed the $\sup$ over the containing set.
\end{define}
The distortion of a map is the distortion of its functional correspondence:
$$
\dis f = \sup_{a, a' \in X}\bigl|\dist{X}(a,a') - \dist{Y}(f(a), f(a'))\bigr|
$$
\begin{define}
  A \emph{correspondence} between two sets $A$ and $B$ is a subset $R \subset A \x B$ such that for any $a \in A$ and $b \in B$ there exist $\va \in A$ and $\vb \in B$ with $(a,\vb)$, $(\va, b)$ in $R$.
\end{define}
We write $aRb$ to say that $a$ and $b$ are related by $R$, and denote the set of all correspondences between metric spaces $A$, $B$ by $\RR(A,B)$.
For $a \in A$ we write $R(a) = \{b \in B : (a, b) \in R\}$; similarly $R^{-1}(b) = \{a \in A : (a, b) \in R\}$ for $b \in B$.

\begin{define}\label{def: simplex}
  A metric space with $n$ points in which all non-zero distances are equal is called the \emph{$n$-point simplex} $\Delta_n$. If these distances are equal to $\lambda > 0$, we denote it by $\lambda\Delta_n$.
\end{define}

\begin{proposition}[\cite{BurBurIva}]\label{theorem:main_formula}
  For any metric spaces $A$ and $B$,
  \begin{equation*}
      2\dGH{}(A,B) = \underset{R \in \RR(A,B)}{\inf} \dis R.
  \end{equation*}
\end{proposition}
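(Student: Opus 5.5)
We prove the two inequalities $2\dGH{}(A,B)\le\inf_R\dis R$ and $\inf_R\dis R\le 2\dGH{}(A,B)$ separately. Both directions are constructive: a realization is turned into a correspondence, and a correspondence is turned into a realization. If either side is infinite, the corresponding inequality is trivial, so we assume finiteness where needed.

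For $\inf_R \dis R \le 2\dGH{}(A,B)$, fix a realization $(\vA,\vB,X)$ with $\di{H}{X}(\vA,\vB)<r$. Identify $A,B$ with $\vA,\vB$ via the isometries and define
$$
R=\{(a,b)\in A\x B : |ab|_X<r\}.
$$
Since $\vA\subset U_r(\vB)$ and $\vB\subset U_r(\vA)$, every point has a partner, so $R$ is a correspondence. For $(a,b),(a',b')\in R$ the triangle inequality in $X$ gives
$$
\bigl||aa'|-|bb'|\bigr|\le |ab|+|a'b'|<2r.
$$
Hence $\dis R\le 2r$. Taking the infimum over realizations and over $r$ gives the inequality.

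For the converse, fix $R$ with $\dis R=2r<\infty$; when $r=0$ we use any $r'>0$ and let $r'\to0$. Set $X=A\sqcup B$. On $X$ keep $\dist{A}$ and $\dist{B}$ on the two pieces, and for $a\in A$, $b\in B$ put
$$
d(a,b)=\inf_{(a',b')\in R}\bigl(|aa'|+r+|b'b|\bigr).
$$
The main obstacle is checking that $d$ is a (pseudo)metric, specifically the triangle inequalities that mix the two pieces. The inequality $d(a,b)\le d(a,c)+|cb|$ for $c\in A$ is immediate from $|aa'|\le|ac|+|ca'|$. The harder case is $|aa''|\le d(a,b)+d(b,a'')$ for $a,a''\in A$ and $b\in B$. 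Choose near-optimal pairs $(a',b'),(a''',b''')\in R$ for the two distances. Then
$$
|a'a'''|\le |b'b'''|+2r\le |b'b|+|bb'''|+2r,
$$
and this bound is exactly what makes the two copies of $r$ absorb the distortion. The inequality then follows by adding $|aa'|$ and $|a'''a''|$ to both sides.

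Because $r>0$, we get $d(a,b)\ge r>0$ between the pieces, so $d$ is a genuine metric on $X$. Since $R$ is a correspondence, every $a$ has some $b$ with $d(a,b)\le r$, and symmetrically every $b$ has some such $a$. Therefore $\di{H}{X}(A,B)\le r=\dis R/2$, and taking the infimum over $R$ completes the proof.
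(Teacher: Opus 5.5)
The paper gives no proof of its own and simply cites Burago--Burago--Ivanov; your argument is correct and is the standard one from that source (the correspondence $\{(a,b): |ab|<r\}$ obtained from a realization, and conversely the metric $d(a,b)=\inf_{(a',b')\in R}(|aa'|+r+|b'b|)$ on $A\sqcup B$ obtained from a correspondence, where $\dis R\le 2r$ is used exactly in the mixed triangle inequality). The only blemish is notational: for an intermediate point $c\in A$, the easy case should read $d(a,b)\le |ac|+d(c,b)$.
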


\begin{example}
  The simplest example of a correspondence $R \in \RR(X,Y)$ comes from a pair of maps $f \: X \to Y$ and $g \: Y \to X$, viewed as functional correspondences, by taking their ``union'': $R_{f,g} = f \cup g^{-1}$.
\end{example}
\begin{define}
  The \emph{codistortion} $\codis(f,g)$ is the quantity
  $$
  \codis(f,g)=\sup_{x\in X,\,y\in Y}\Bigl|\,\bigl|x\,g(y)\bigr|-\bigl|f(x)\,y\bigr|\,\Bigr|.
  $$
\end{define}
For the correspondence $R_{f,g}$ from the previous example,
$$
\dis R_{f,g} = \max\bigl\{\dis f,\,\dis g,\,\codis(f,g)\bigr\}:
$$
pairs of points in $R_{f,g}$ are of three kinds --- both taken from $f$, both from $g^{-1}$, or one from each --- and the suprema over them give exactly $\dis f$, $\dis g$ and $\codis(f,g)$.

\begin{proposition}[\cite{TuzCont}]~\label{prop: gh via maps}
  For any $X,Y\in\GH$,
  $$
  \dGH{}(X,Y) =  
   \frac12\,\inf_{\substack{f\:X\to Y\\ g\:Y\to X}}\, \dis R_{f,g} = 
  \frac12\,\inf_{\substack{f\:X\to Y\\ g\:Y\to X}}\,\sup\bigl\{\dis f,\,\dis g,\,\codis(f,g)\bigr\}.
  $$
\end{proposition}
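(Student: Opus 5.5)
We prove the two inequalities separately, using Proposition~\ref{theorem:main_formula} as the bridge. Write $\Phi = \inf_{f,g} \dis R_{f,g}$. By that proposition, $2\dGH{}(X,Y) = \inf_{R\in\RR(X,Y)} \dis R$, so it suffices to show $\Phi = \inf_R \dis R$. The identity $\dis R_{f,g}=\max\{\dis f,\dis g,\codis(f,g)\}$ was already explained before the statement. It therefore remains only to identify the infimum over pairs of maps with the infimum over all correspondences.

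\textbf{Step 1 ($\ge$).} For any maps $f\:X\to Y$ and $g\:Y\to X$, the set $R_{f,g}=f\cup g^{-1}$ is a correspondence. Indeed, every $x\in X$ is related to $f(x)$, and every $y\in Y$ is related to $g(y)$, i.e.\ $(g(y),y)\in R_{f,g}$. Hence $R_{f,g}\in\RR(X,Y)$, so $\inf_R\dis R\le \dis R_{f,g}$ for every pair. Taking the infimum over pairs gives $\inf_R\dis R\le\Phi$.

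\textbf{Step 2 ($\le$).} Let $R\in\RR(X,Y)$ be arbitrary. By the definition of a correspondence, $R(x)\neq\0$ for every $x\in X$ and $R^{-1}(y)\neq\0$ for every $y\in Y$. Using the axiom of choice (global choice is available in NBG, though here $X,Y$ are sets, so ordinary choice suffices), pick $f(x)\in R(x)$ and $g(y)\in R^{-1}(y)$. Then $(x,f(x))\in R$ and $(g(y),y)\in R$, so $R_{f,g}\subseteq R$. By the monotonicity of distortion noted after its definition, $\dis R_{f,g}\le\dis R$. Therefore $\Phi\le\dis R$ for every $R$, and so $\Phi\le\inf_R\dis R$.

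Combining the two steps, $\Phi=\inf_R\dis R=2\dGH{}(X,Y)$, which proves both displayed equalities. All quantities may be infinite (for instance, for unbounded spaces), but every inequality above is valid in $[0,\infty]$.

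There is no real obstacle. The only points requiring care are (i) checking that $R_{f,g}$ is a genuine correspondence, and (ii) noting that the choice of $f$ and $g$ from $R$ requires only that the fibres of $R$ be nonempty, which holds by the definition of a correspondence.
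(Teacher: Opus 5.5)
Your proof is correct. $R_{f,g}$ is a correspondence, every correspondence $R$ contains some $R_{f,g}$ (obtained by choosing $f(x)\in R(x)$ and $g(y)\in R^{-1}(y)$), and monotonicity of distortion under inclusion then reduces the statement to Proposition~\ref{theorem:main_formula}. The paper gives no proof of its own and only cites the result, but this is the standard argument, built from the ingredients the paper sets up around the statement: the example $R_{f,g}=f\cup g^{-1}$, the monotonicity remark after the definition of distortion, and the three-way decomposition of $\dis R_{f,g}$.
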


The formula shows that the larger the class of morphisms, the smaller the infimum: if the morphisms of a subcategory $\mathcal{M}_1$ lie among the morphisms of $\mathcal{M}_2$, then $\dGH{\FF_2}\le\dGH{\FF_1}$ (formally: Proposition~\ref{prop:FFmonotone}).

\begin{proposition}[\cite{TuzCont}]~\label{prop: triangle inequality}
Let $X$, $Y$ and $Z$ be three metric spaces, and let $f: X \to Y$, $g: Y \to X$, $h: Y \to Z$, $k: Z \to Y$ be maps, denoted compactly as $X\torllim^f_gY\torllim^h_kZ$ (the upper indices are the ``forward'' maps, the lower ones the ``backward'' maps). Then $\codis(h\c f,g\c k)\le\codis(f,g)+\codis(h,k)$.
\end{proposition}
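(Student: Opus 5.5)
We have to prove that for any $x\in X$ and $z\in Z$,
$$
\Bigl|\,\bigl|x\,g(k(z))\bigr|-\bigl|h(f(x))\,z\bigr|\,\Bigr|\le\codis(f,g)+\codis(h,k).
$$
The plan is to insert an intermediate quantity measured in the middle space $Y$ and split the difference into two pieces, each controlled by one of the two codistortions. The natural intermediate term is $|f(x)\,k(z)|_Y$: it involves the image of $x$ under the forward map $f$ and the image of $z$ under the backward map $k$, so it can be compared with a distance in $X$ through the pair $(f,g)$ and with a distance in $Z$ through the pair $(h,k)$.

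First, I apply the definition of $\codis(f,g)$ to the point $x\in X$ and the point $y=k(z)\in Y$. This gives
$$
\Bigl|\,\bigl|x\,g(k(z))\bigr|-\bigl|f(x)\,k(z)\bigr|\,\Bigr|\le\codis(f,g).
$$
Second, I apply the definition of $\codis(h,k)$, where $h\:Y\to Z$ and $k\:Z\to Y$. Here the roles are $y'=f(x)\in Y$ and $z\in Z$, and the definition reads $\bigl|\,|y'\,k(z)|-|h(y')\,z|\,\bigr|\le\codis(h,k)$. Substituting $y'=f(x)$ gives
$$
\Bigl|\,\bigl|f(x)\,k(z)\bigr|-\bigl|h(f(x))\,z\bigr|\,\Bigr|\le\codis(h,k).
$$
The only point to check is that the arguments are placed in the correct slots, since the definition of codistortion is not symmetric in its two variables.

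Finally, the triangle inequality for absolute values of real numbers combines the two estimates into the bound displayed at the start. Taking the supremum over all $x\in X$ and $z\in Z$ yields $\codis(h\c f,g\c k)\le\codis(f,g)+\codis(h,k)$. If either codistortion on the right is infinite, the inequality holds trivially.

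No step is a real obstacle. The only work is choosing $|f(x)\,k(z)|$ as the intermediate term and verifying that the substitutions match the order of arguments in the definition of $\codis$.
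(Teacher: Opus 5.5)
Your proof is correct. You insert $|f(x)\,k(z)|$, apply $\codis(f,g)$ at the pair $(x,k(z))$ and $\codis(h,k)$ at the pair $(f(x),z)$ with the arguments in the correct slots, and finish with the triangle inequality for real numbers. The paper does not prove this proposition itself but cites it from \cite{TuzCont}, and your argument is the standard one-line proof of that cited result.
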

We introduce the notation $\mathbb{R}_+^n = \{(x_1, \ldots, x_n) \in \mathbb{R}^n : x_1 \geq 0\}$.

\begin{define}
    Let $M$ be a Hausdorff space with a countable base.
    A family of pairs $\{(U_i,\varphi_i)\}$, where the $U_i$ are open sets
    covering $M$ and each $\varphi_i$ is a homeomorphism of $U_i$ onto an open
    subset of $\mathbb{R}^n_+$, is called a $C^k$-atlas if
    the maps
    $$
    \varphi_i \circ \varphi_j^{-1} \bigl|_{\varphi_j(U_i \cap U_j)}
    $$
    are of class $C^k$ for all $i,j$.

    A $C^k$-structure on $M$ is a $C^k$-atlas maximal by inclusion.
    The pair $(M, C^k\textup{-structure})$ is called a $C^k$-manifold
    with boundary (possibly empty).
\end{define}
\begin{comm}
  Smoothness at the boundary is understood as follows. A function defined on a subset of the half-space $\R^n_+$ that is open in $\R^n_+$ is called $C^k$-smooth if it extends to a $C^k$-smooth function on some open subset of $\R^n$. This is consistent with the transition maps of the atlas and defines the class $C^k$ at boundary points.
\end{comm}
\begin{comm}\label{comm: paracompact}
A $C^k$-manifold (with or without boundary) is paracompact.
\end{comm}
\begin{comm}
  In this work, by a ``manifold'' we mean both ordinary manifolds and manifolds with boundary.
\end{comm}
\begin{define}
    Let $M$ and $N$ be $C^k$-smooth manifolds and let
    $f : M \to N$ be a map.

    The map $f$ is called \emph{$C^r$-smooth} ($0 \le r \le k$) if
    for each point $x \in M$ there are a chart $(U, \varphi)$ of $M$
    with $x \in U$ and a chart $(V, \psi)$ of $N$ with $f(x) \in V$, such that
    $f(U) \subset V$ and the map
    $$
    \psi \circ f \circ \varphi^{-1} :
    \varphi(U) \to \psi(V)
    $$
    is a $C^r$-smooth map between open subsets of
    $\mathbb{R}^n$ and $\mathbb{R}^m$.
\end{define}

In the classical literature on Gromov triples one usually restricts to probability or finite measures, but we work with a larger class of measures.
The reason is the following: submanifolds of $\R^n$ are often unbounded, so the measure induced on them by the Lebesgue measure is not finite (but it is boundedly finite).
\begin{define}
  An ordered triple $(X, d, \mu)$, where
  \begin{enumerate}
    \item $(X,d)$ is a metric space,
    \item $\mu$ is a Borel boundedly finite measure (all balls, or equivalently all bounded sets, have finite measure),
  \end{enumerate}
  is called a \emph{Gromov metric triple}.
  Recall that a map $F \: X \to Y$ between spaces with Borel sigma-algebras is called \emph{measurable} if the preimage $F^{-1}(B)$ of any Borel set $B \subset Y$ is Borel in $X$; for a measurable $F$, the pushforward of the measure is defined by $(F_* \mu_X)(B) = \mu_X(F^{-1}(B))$ for all Borel sets $B \subset Y$.
  Let $X = (X, \dist{X}, \mu_X)$ and $Y = (Y, \dist{Y}, \mu_Y)$ be Gromov metric triples. A map $F \: X \to Y$ is called an \emph{isomorphism} of Gromov metric triples if $F$ is an isometry of the underlying metric spaces that preserves the measure: $F_* \mu_X = \mu_Y$. An isometry is continuous and hence Borel measurable, so $F_* \mu_X$ is well defined.
  We denote by $\GH_{mm}$ the \emph{class of Gromov metric triples}, choosing one representative in each isomorphism class.
  In this work, the distance between Gromov metric triples is computed as the distance between their underlying metric spaces.
\end{define}

\section{The $\FF$-Gromov--Hausdorff distance}\label{sec: f-dist}
We now give a general construction.
\begin{define}\label{definition F distance}
  Let $\GH$ be the category whose objects are all metric spaces (one representative of each isometry class) and whose morphisms form the class $\OO$ of all maps between them.
  Let $\mathcal{M}$ be a subcategory of $\GH$ and let $\FF$ be its class of morphisms; as usual for a category, $\FF$ contains the identity maps and is closed under composition.

We write $\FF(X,Y)$ for the class of morphisms in $\FF$ acting from a metric space $X$ to $Y$, and $\FF[X,Y]$ for the union of the classes $\FF(X,Y)$ and $\FF(Y,X)$.\par

The \emph{$\FF$-Gromov--Hausdorff distance} between objects $X, Y \in \operatorname{Ob}(\mathcal{M})$ is then the following expression:
$$
\dGH{\FF}(X,Y)=\frac12\,\inf_{\substack{f\in \FF(X,Y)\\ g\in \FF(Y,X)}}\,\dis R_{f,g}=
\frac12\,\inf_{\substack{f\in \FF(X,Y)\\ g\in \FF(Y,X)}}\,\max\bigl\{\dis f,\,\dis g,\,\codis(f,g)\bigr\}.
$$

The infimum over the empty set is taken to be infinity.
\end{define}
The next example restates Proposition~\ref{prop: gh via maps}.
\begin{example}
  $\dGH{\OO} = \dGH{}$.
\end{example}
\begin{example}\label{example: continuous distance}
  If we take the subcategory with the same class of objects whose morphisms are the continuous maps, we obtain the \emph{continuous Gromov--Hausdorff distance} $\dGH{C}$.
\end{example}
\begin{proposition}
  For any subcategory $\mathcal{M}$ of $\GH$, the corresponding $\FF$-Gromov--Hausdorff distance is a generalized pseudometric on the class of objects $\operatorname{Ob}(\mathcal{M})$.
\end{proposition}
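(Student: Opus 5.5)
We have to check three properties of $\dGH{\FF}$: it vanishes on the diagonal, it is symmetric, and it satisfies the triangle inequality. Values in $[0,\infty]$ are automatic, since distortions are nonnegative and the infimum over the empty set is $\infty$.

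\emph{Diagonal.} For $X\in\operatorname{Ob}(\mathcal{M})$ the identity $\id_X$ lies in $\FF(X,X)$, because $\FF$ is the morphism class of a subcategory. Take the pair $f=g=\id_X$. Then $\dis f=\dis g=0$, and $\codis(\id_X,\id_X)=\sup_{x,y}\bigl||xy|-|xy|\bigr|=0$. Hence $\dGH{\FF}(X,X)=0$.

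\emph{Symmetry.} The pair $(f,g)$ with $f\in\FF(X,Y)$, $g\in\FF(Y,X)$ is the same data as the pair $(g,f)$ used for $\dGH{\FF}(Y,X)$. Distortions of single maps do not depend on the order. For the codistortion,
\[
\codis(g,f)=\sup_{y\in Y,\,x\in X}\bigl||y\,f(x)|-|g(y)\,x|\bigr|=\codis(f,g).
\]
So the two infima are taken over the same set of values and coincide.

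\emph{Triangle inequality.} Let $X,Y,Z\in\operatorname{Ob}(\mathcal{M})$. If either $\dGH{\FF}(X,Y)$ or $\dGH{\FF}(Y,Z)$ is infinite, there is nothing to prove. Otherwise fix $\eps>0$ and choose morphisms $X\torllim^f_gY\torllim^h_kZ$ in $\FF$ with
\[
\dis R_{f,g}<2\dGH{\FF}(X,Y)+\eps,\qquad \dis R_{h,k}<2\dGH{\FF}(Y,Z)+\eps.
\]
Since $\FF$ is closed under composition, $h\c f\in\FF(X,Z)$ and $g\c k\in\FF(Z,X)$. We bound the three terms of $\dis R_{h\c f,g\c k}$ separately.

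\begin{itemize}
\item For distortions of compositions, $\dis(h\c f)\le\dis f+\dis h$. This follows by inserting $|f(a)f(a')|$ between $|aa'|$ and $|h f(a)\,h f(a')|$. In the same way, $\dis(g\c k)\le\dis g+\dis k$.
\item For the codistortion, Proposition~\ref{prop: triangle inequality} gives $\codis(h\c f,g\c k)\le\codis(f,g)+\codis(h,k)$.
\end{itemize}

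Each right-hand side is at most $\dis R_{f,g}+\dis R_{h,k}$. Taking the maximum of the three terms,
\[
2\dGH{\FF}(X,Z)\le\dis R_{h\c f,g\c k}\le\dis R_{f,g}+\dis R_{h,k}<2\dGH{\FF}(X,Y)+2\dGH{\FF}(Y,Z)+2\eps.
\]
Letting $\eps\to0$ gives the triangle inequality.

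The only step that is not bookkeeping is the codistortion bound for compositions. It is exactly Proposition~\ref{prop: triangle inequality}, which we take as given. The other point to watch is that the categorical axioms (identities and closure under composition) are precisely what guarantee that the maps $\id_X$, $h\c f$ and $g\c k$ used above lie in $\FF$.
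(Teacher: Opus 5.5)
Your proposal is correct and follows essentially the same route as the paper: the identity morphism gives vanishing on the diagonal, symmetry is read off the formula, and the triangle inequality comes from closure of $\FF$ under composition combined with Proposition~\ref{prop: triangle inequality}. You also explicitly supply two details the paper leaves implicit, the bound $\dis(h\c f)\le\dis f+\dis h$ and the $\eps$-approximation with the infinite cases.
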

\begin{proof}
  Since the class of morphisms of a subcategory contains the identity maps, $\dGH{\FF}(X,X) = 0$ for every object $X$.
    Symmetry follows from the symmetry of the distance formula.
The triangle inequality follows from the closure of $\FF$ under composition and Proposition~\ref{prop: triangle inequality}.
\end{proof}
The next statement is a trivial consequence of the definition.
\begin{proposition}\label{prop:FFmonotone}
  Let $\mathcal{M}_1$ and $\mathcal{M}_2$ be subcategories of $\GH$ with classes of morphisms $\FF_1 \subseteq \FF_2$.
Then for any $X, Y \in \operatorname{Ob}(\mathcal{M}_1) \cap \operatorname{Ob}(\mathcal{M}_2)$,
  $$
    \dGH{\FF_2}(X,Y)\le\dGH{\FF_1}(X,Y).
  $$
\end{proposition}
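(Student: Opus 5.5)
The plan is to compare the two infima in Definition~\ref{definition F distance} directly. Fix objects $X, Y \in \operatorname{Ob}(\mathcal{M}_1) \cap \operatorname{Ob}(\mathcal{M}_2)$. The argument has three steps.

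\emph{Step 1: the index sets are nested.} Because $\FF_1 \subseteq \FF_2$, we have $\FF_1(X,Y) \subseteq \FF_2(X,Y)$ and $\FF_1(Y,X) \subseteq \FF_2(Y,X)$. The reason is that a morphism of $\mathcal{M}_1$ from $X$ to $Y$ is the same map, with the same domain and codomain, when viewed as an element of $\FF_2$. Consequently the set of admissible pairs $(f,g)$ for $\FF_1$ is a subset of the set of admissible pairs for $\FF_2$.

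\emph{Step 2: the quantity being minimized is the same.} The expression $\max\{\dis f,\dis g,\codis(f,g)\}$ depends only on the maps $f$, $g$ and the metrics of $X$ and $Y$. It does not depend on which subcategory the maps are regarded as belonging to.

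\emph{Step 3: compare the infima.} An infimum taken over a subset is never smaller than the infimum of the same function over the larger set. This gives
$$
\inf_{\substack{f\in \FF_2(X,Y)\\ g\in \FF_2(Y,X)}}\dis R_{f,g}\;\le\;\inf_{\substack{f\in \FF_1(X,Y)\\ g\in \FF_1(Y,X)}}\dis R_{f,g}.
$$
Multiplying both sides by $\tfrac12$ yields $\dGH{\FF_2}(X,Y)\le\dGH{\FF_1}(X,Y)$.

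The only point needing care is the convention in Definition~\ref{definition F distance} that the infimum over the empty set is $+\infty$. If the $\FF_1$-index set is empty, the right-hand side is $+\infty$ and the inequality holds trivially. If the $\FF_1$-index set is nonempty, then by Step 1 the $\FF_2$-index set is nonempty too, and the comparison of infima in Step 3 applies directly.

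I do not expect any genuine obstacle here. The statement is purely order-theoretic: it uses only the inclusion of the index sets and the fact that the objective function is identical on both sides. It does not need the category axioms or Proposition~\ref{prop: triangle inequality}.
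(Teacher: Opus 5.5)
Your proposal is correct and is the same argument the paper has in mind. The paper gives no written proof and calls the proposition a trivial consequence of Definition~\ref{definition F distance}: an infimum of $\dis R_{f,g}$ over the larger index set $\FF_2(X,Y)\times\FF_2(Y,X)$ cannot exceed the infimum over $\FF_1(X,Y)\times\FF_1(Y,X)$, and your check of the empty-index-set convention makes that explicit.
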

\begin{corol}\label{proposition:GHleFGH}
Let $\FF$ be the class of morphisms of a subcategory $\mathcal{M}$; then for any $X, Y \in \operatorname{Ob}(\mathcal{M})$,
$$
\dGH{}(X,Y)\le\dGH{\FF}(X,Y).
$$
\end{corol}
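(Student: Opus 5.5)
This follows from Proposition~\ref{prop:FFmonotone}, applied with $\FF_1 = \FF$ and $\FF_2 = \OO$, the class of all maps between metric spaces. Every morphism of $\mathcal{M}$ is in particular a map between metric spaces, so $\FF \subseteq \OO$. The objects of $\mathcal{M}$ are among the objects of $\GH$, so $\operatorname{Ob}(\mathcal{M}) \cap \operatorname{Ob}(\GH) = \operatorname{Ob}(\mathcal{M})$. The proposition therefore gives $\dGH{\OO}(X,Y) \le \dGH{\FF}(X,Y)$ for all $X, Y \in \operatorname{Ob}(\mathcal{M})$.

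It remains to identify the left-hand side. By the example following Definition~\ref{definition F distance}, which is a restatement of Proposition~\ref{prop: gh via maps}, we have $\dGH{\OO} = \dGH{}$. Substituting this gives $\dGH{}(X,Y) \le \dGH{\FF}(X,Y)$.

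We could also argue directly without Proposition~\ref{prop:FFmonotone}. For every pair $f \in \FF(X,Y)$, $g \in \FF(Y,X)$, the set $R_{f,g}$ is a correspondence: $f$ covers every $x \in X$, and $g^{-1}$ covers every $y \in Y$. Proposition~\ref{theorem:main_formula} then gives $2\dGH{}(X,Y) \le \dis R_{f,g}$, and taking the infimum over all such pairs yields the claim. If $\FF(X,Y)$ or $\FF(Y,X)$ is empty, the right-hand side is $+\infty$ by the convention that the infimum over the empty set is infinite, and the inequality holds trivially.

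No step is a real obstacle. The only points to check are that $\mathcal{M}$ sits inside the category $\GH$, so that $\FF \subseteq \OO$, and the empty-class convention.
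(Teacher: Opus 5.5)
Your proof is correct and is essentially the paper's argument: the paper states the corollary as an immediate consequence of Proposition~\ref{prop:FFmonotone} applied to $\FF\subseteq\OO$, combined with the identity $\dGH{\OO}=\dGH{}$ from Proposition~\ref{prop: gh via maps}. Your alternative direct argument is also valid and amounts to the same idea unpacked: each $R_{f,g}$ is a correspondence, so Proposition~\ref{theorem:main_formula} bounds $2\dGH{}(X,Y)$ by $\dis R_{f,g}$, and the empty-set convention handles the degenerate case.
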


\begin{define}
  For maps $f_1, f_2$ in the same $\OO(X,Y)$, we set
  $$
  \dist{\OO}(f_1, f_2) = \lowsup{x \in X} \dist{Y}\bigl(f_1(x), f_2(x)\bigr),
  $$
  and between maps acting between different pairs of spaces we set the distance to be infinity. This gives a generalized pseudometric on $\OO$.
\end{define}

The space $\OO$ splits into subsets consisting of the maps acting between the same pair of spaces, and the distance between two such subsets is infinity.\par
We now prove an auxiliary lemma.

\begin{lemma}\label{lemma:dis for close functions}
    Let $f_1, f_2 \in \OO(X,Y)$, $g_1, g_2 \in \OO(Y,X)$ and
    \begin{align*}
      \dist{\OO}(f_1, f_2) = C_1 \\
      \dist{\OO}(g_1, g_2) = C_2
    \end{align*}
    Then
    \begin{enumerate}
      \item $\dis(f_1) \leq \dis(f_2) + 2C_1$.
      \item $\dis(g_1) \leq \dis(g_2) + 2C_2$.
      \item $\codis(f_1, g_1) \leq \codis(f_2, g_2) + C_1 + C_2$.
    \end{enumerate}
\end{lemma}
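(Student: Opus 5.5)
The plan is a pointwise estimate with the triangle inequality in $Y$ (resp. $X$), followed by passing to suprema. If $C_1$ or $C_2$ is infinite the corresponding inequality is trivial, so I assume they are finite. By definition of $\dist{\OO}$ we then have, for every $x\in X$, $\dist{Y}(f_1(x),f_2(x))\le C_1$, and for every $y\in Y$, $\dist{X}(g_1(y),g_2(y))\le C_2$.

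For item~1, I fix $a,a'\in X$. The triangle inequality in $Y$, used twice, gives
$$
\bigl|\dist{Y}(f_1(a),f_1(a'))-\dist{Y}(f_2(a),f_2(a'))\bigr|\le \dist{Y}(f_1(a),f_2(a))+\dist{Y}(f_1(a'),f_2(a'))\le 2C_1 .
$$
Adding and subtracting $\dist{Y}(f_2(a),f_2(a'))$ inside the absolute value then yields
$$
\bigl|\dist{X}(a,a')-\dist{Y}(f_1(a),f_1(a'))\bigr|\le \bigl|\dist{X}(a,a')-\dist{Y}(f_2(a),f_2(a'))\bigr|+2C_1\le \dis f_2+2C_1 .
$$
Taking the supremum over $a,a'$ proves item~1. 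Item~2 is the same argument with the roles of $X$ and $Y$ exchanged, using $g_1,g_2$ and $C_2$.

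For item~3, I fix $x\in X$ and $y\in Y$. The same reasoning gives $\bigl|\,|x\,g_1(y)|-|x\,g_2(y)|\,\bigr|\le |g_1(y)\,g_2(y)|\le C_2$ in $X$ and $\bigl|\,|f_1(x)\,y|-|f_2(x)\,y|\,\bigr|\le C_1$ in $Y$. Splitting
$$
|x\,g_1(y)|-|f_1(x)\,y| = \bigl(|x\,g_1(y)|-|x\,g_2(y)|\bigr)+\bigl(|x\,g_2(y)|-|f_2(x)\,y|\bigr)+\bigl(|f_2(x)\,y|-|f_1(x)\,y|\bigr),
$$
I bound the middle term by $\codis(f_2,g_2)$ and the outer terms by $C_2$ and $C_1$. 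Taking the supremum over $x,y$ gives item~3.

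There is no genuine obstacle here. The only care needed is with infinite values: if $\dis f_2$ or $\codis(f_2,g_2)$ is infinite, the inequality is trivially true, so the arithmetic above only has to be carried out in the finite case.
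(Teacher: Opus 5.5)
Your proposal is correct and takes essentially the same approach as the paper. For items 1--2 you use the pointwise double triangle inequality to get the $2C_1$ (resp. $2C_2$) error, for item 3 you use the same three-term splitting, and then you take suprema; your explicit treatment of infinite values is a small addition that the paper leaves implicit.
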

  \begin{proof}
    We prove the first item; the second one is similar.
    \begin{align*}
      \dis(f_1) &= \lowsup{x, x' \in X} \bigl||f_1(x), f_1(x')| - |x,x'|\bigr| \leq \\
      \leq &
      \lowsup{x, x' \in X} \bigl||f_1(x), f_1(x')| - |f_2(x), f_2(x')| + |f_2(x), f_2(x')|  - |x,x'|\bigr| 
      \leq \\
      \leq&
      \lowsup{x, x' \in X} \bigl||f_1(x), f_1(x')| -  |f_2(x), f_2(x')| \bigr| + \dis(f_2)
      \leq \\
      \leq &
      \lowsup{x, x' \in X} \bigl||f_1(x), f_1(x')| - |f_1(x'), f_2(x')| + |f_1(x'), f_2(x')| -  |f_2(x), f_2(x')| \bigr| + \dis(f_2)
      \leq \\
      \leq &
      \lowsup{x, x' \in X} \bigl||f_1(x), f_2(x)| + |f_1(x'), f_2(x')| \bigr| + \dis(f_2) 
      \leq
      \dis(f_2) + 2C_1  
    \end{align*}
    Now the last item.
    \begin{align*}
      \codis(f_1, g_1) = & \lowsup{x \in X, y \in Y} \bigl||f_1(x), y| - |x, g_1(y)|\bigr| \leq \\
      \leq &
      \lowsup{x \in X, y \in Y} \bigl||f_1(x), y| - |f_2(x), y| + |f_2(x), y| - |x, g_2(y)| + |x, g_2(y)| - |x, g_1(y)|\bigr| 
      \leq \\
      \leq &
      \, C_1 + \codis(f_2, g_2) + C_2
    \end{align*}
  \end{proof}
In Theorem~\ref{theorem: hausd_dist} and its corollaries, the classes of maps need not be classes of morphisms of subcategories: the proofs use only the distance formula, and the categorical conditions (identity maps and closure under composition) are needed only for the pseudometric axioms.
\begin{theorem}~\label{theorem: hausd_dist}
  Let $\FF_1[X,Y]$ and $\FF_2[X,Y]$ be two classes of maps with
  $$
  \di{H}{\OO}(\FF_1[X,Y], \FF_2[X,Y]) = D.
  $$
  Then
  $$
    |\dGH{\FF_1}(X,Y) - \dGH{\FF_2}(X,Y)| \le D.
  $$
\end{theorem}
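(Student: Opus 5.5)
By symmetry of the statement in $\FF_1$ and $\FF_2$, it suffices to prove the one-sided bound
$$
\dGH{\FF_1}(X,Y)\le \dGH{\FF_2}(X,Y)+D.
$$
If $\dGH{\FF_2}(X,Y)=\infty$ or $D=\infty$ there is nothing to prove, so assume both are finite. We then pass to an approximating pair: fix $\eps>0$ and choose $f_2\in\FF_2(X,Y)$ and $g_2\in\FF_2(Y,X)$ with
$$
\tfrac12\max\{\dis f_2,\dis g_2,\codis(f_2,g_2)\}\le \dGH{\FF_2}(X,Y)+\eps.
$$

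Next we use the Hausdorff distance to find nearby maps in $\FF_1$. Since $\di{H}{\OO}(\FF_1[X,Y],\FF_2[X,Y])=D<\infty$, the element $f_2\in\FF_2[X,Y]$ lies within $D+\eps$ of some element of $\FF_1[X,Y]$. Here we use the convention that $\dist{\OO}$ is infinite between maps acting between different pairs of spaces. Because that distance is finite, the nearby element must act from $X$ to $Y$, so we get $f_1\in\FF_1(X,Y)$ with $\dist{\OO}(f_1,f_2)\le D+\eps$. Similarly we obtain $g_1\in\FF_1(Y,X)$ with $\dist{\OO}(g_1,g_2)\le D+\eps$.

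Lemma~\ref{lemma:dis for close functions} then gives
$$
\dis f_1\le \dis f_2+2(D+\eps),\qquad
\dis g_1\le \dis g_2+2(D+\eps),
$$
$$
\codis(f_1,g_1)\le\codis(f_2,g_2)+2(D+\eps).
$$
Hence $\max\{\dis f_1,\dis g_1,\codis(f_1,g_1)\}\le 2\dGH{\FF_2}(X,Y)+2\eps+2D+2\eps$. Halving this and letting $\eps\to0$ yields the one-sided bound.

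The Lemma's estimates are already exactly what is needed, so the computations are not the obstacle. The only real subtlety is the bookkeeping about directions: the classes are mixed into $\FF[X,Y]$, and we must check that a close element lands in the correct direction. This is exactly what the infinite-distance convention ensures. The case $X=Y$ is harmless, since then both directions coincide.

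A second point is the infimum convention. The case where $\FF_1(X,Y)$ is empty while $\FF_2(X,Y)$ is not forces $D=\infty$, so it is covered by the trivial case. Likewise, the case where both distances are infinite is consistent with the statement, with $\infty-\infty$ interpreted as $0$.
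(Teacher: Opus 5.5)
Your proof is correct and follows the paper's argument: you approximate a near-optimal pair by maps from the other class within $D+\eps$, apply Lemma~\ref{lemma:dis for close functions}, and finish by symmetry. Your orientation, starting from a near-optimal $\FF_2$-pair and finding nearby $\FF_1$-maps, is the one that gives $\dGH{\FF_1}\le\dGH{\FF_2}+D$ directly (the paper starts from an arbitrary $\FF_1$-pair and relies on symmetry to repair the direction), and your treatment of the direction bookkeeping and of the infinite cases fills in details the paper leaves implicit.
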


\begin{proof}
  Let $\eps > 0$, and let $f_1 \in \FF_1(X,Y)$ and $g_1 \in \FF_1(Y,X)$.
  Then there exist $f_2 \in \FF_2(X,Y)$ and $g_2 \in \FF_2(Y,X)$ such that $\dist{\OO}(f_1, f_2) \leq D + \eps$ and $\dist{\OO}(g_1, g_2) \leq D + \eps$.
    By Lemma~\ref{lemma:dis for close functions},
    \begin{enumerate}
      \item $\dis(f_1) \leq \dis(f_2) + 2(D + \eps)$.
      \item $\dis(g_1) \leq \dis(g_2) + 2(D + \eps)$.
      \item $\codis(f_1, g_1) \leq \codis(f_2, g_2) + 2(D + \eps)$.
    \end{enumerate}
    By the definition of the $\FF$-Gromov--Hausdorff distance,
    $$
    \dGH{\FF}(X,Y)=\frac12\,\inf_{\substack{f\in \FF(X,Y)\\ g\in \FF(Y,X)}}\,\max\bigl\{\dis f,\,\dis g,\,\codis(f,g)\bigr\},
    $$ 
    hence $\dGH{\FF_1}(X,Y) \le \dGH{\FF_2}(X,Y) + D + \eps$.
    Since $\eps$ is arbitrary and the assumption is symmetric in $\FF_1$ and $\FF_2$, we get the result.
\end{proof}

\begin{corol}~\label{corol: hausd_dist_ext}
    Let $\FF_1$ and $\FF_2$ be two classes of maps with
  $$
  \di{H}{\OO}(\FF_1, \FF_2) = D.
  $$
  Then for any pair $(X, Y)$,
  $$
    |\dGH{\FF_1}(X,Y) - \dGH{\FF_2}(X,Y)| \le D.
  $$
\end{corol}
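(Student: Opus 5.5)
The plan is to reduce the corollary directly to Theorem~\ref{theorem: hausd_dist}: I will show that the Hausdorff distance between the whole classes bounds the Hausdorff distance between the slices over each pair of spaces. Fix a pair $(X,Y)$. The key observation is that $\dist{\OO}$ equals $+\infty$ between maps acting between different pairs of spaces.

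First I check that $\di{H}{\OO}(\FF_1[X,Y],\FF_2[X,Y])\le D$. Take any $f_1\in\FF_1(X,Y)$ and any $r>D$. Since $\FF_1\subset U_r(\FF_2)$, there is some $f_2\in\FF_2$ with $\dist{\OO}(f_1,f_2)<r<\infty$. A finite distance forces $f_2$ to act between the same spaces as $f_1$, so $f_2\in\FF_2(X,Y)\subset\FF_2[X,Y]$. The same argument applies to elements of $\FF_1(Y,X)$, and symmetrically to elements of $\FF_2[X,Y]$. Hence each slice lies in the open $r$-neighbourhood of the other for every $r>D$, and the slice Hausdorff distance is at most $D$.

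Then I apply the argument of Theorem~\ref{theorem: hausd_dist} with this bound $D'\le D$. That proof only uses the existence of approximating maps within $D'+\eps$ in each direction, so it yields $|\dGH{\FF_1}(X,Y)-\dGH{\FF_2}(X,Y)|\le D'\le D$.

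The only point needing care is the degenerate one: the Hausdorff distance is defined for non-empty sets, but a slice may be empty. If $D=\infty$ there is nothing to prove. If $D<\infty$, the argument above shows that $\FF_1(X,Y)$ is empty exactly when $\FF_2(X,Y)$ is, and likewise for maps $Y\to X$. So either both distances are infinite (infimum over the empty set, and I read the inequality as trivially satisfied), or both slices are non-empty and the theorem applies. I expect no real obstacle beyond recording this convention.
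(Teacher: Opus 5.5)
Your proposal is correct and is essentially the paper's argument. For $D<\infty$, the infinite $\dist{\OO}$-distance between maps acting between different pairs of spaces forces approximants into the same slice, which gives $\di{H}{\OO}(\FF_1[X,Y],\FF_2[X,Y])\le D$, and then Theorem~\ref{theorem: hausd_dist} applies; your explicit handling of the open-neighbourhood step and of empty slices is extra care the paper leaves implicit.
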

\begin{proof}
  The case $D = \infty$ is obvious. Let $D < \infty$.\par
  Since the distance between $f_1 \: X_1 \to Y_1$ and $f_2 \: X_2 \to Y_2$ for different pairs $(X_1, Y_1)$ and $(X_2, Y_2)$ is infinity, the maps at finite distance from a map $f \in \FF_1[X,Y]$ all lie in $\FF_2[X,Y]$.
  Hence $\di{H}{\OO}(\FF_1[X,Y], \FF_2[X,Y]) \leq D$. 
  It remains to apply Theorem~\ref{theorem: hausd_dist}.
\end{proof}
\begin{corol}~\label{corol: close_function_class}
  Let $\FF_1$ and $\FF_2$ be two classes of maps.
Suppose that for given metric spaces $X$ and $Y$ the class $\FF_1[X,Y]$ is dense in $\FF_2[X,Y]$, that is, the closure of $\FF_1[X,Y]$ coincides with $\FF_2[X,Y]$.
  Then
  $$
    \dGH{\FF_1}(X,Y) = \dGH{\FF_2}(X,Y).
  $$
\end{corol}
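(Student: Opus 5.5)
The plan is to reduce the statement to Theorem~\ref{theorem: hausd_dist} by showing that the Hausdorff distance in the generalized pseudometric $\dist{\OO}$ between $\FF_1[X,Y]$ and $\FF_2[X,Y]$ is zero. Then Theorem~\ref{theorem: hausd_dist} gives $|\dGH{\FF_1}(X,Y)-\dGH{\FF_2}(X,Y)|\le 0$, which is the claim.

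The first step is to read the density hypothesis. It says that the closure of $\FF_1[X,Y]$ in $\dist{\OO}$ equals $\FF_2[X,Y]$. In particular $\FF_1[X,Y]\subseteq\FF_2[X,Y]$, since a set lies in its own closure.

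The second step is to bound the two halves of the Hausdorff distance separately.
\begin{itemize}
\item Every $f_1\in\FF_1[X,Y]$ lies in $\FF_2[X,Y]$. So it is at distance $0$ from $\FF_2[X,Y]$, and $\FF_1[X,Y]\subset U_r(\FF_2[X,Y])$ for every $r>0$.
\item Every $f_2\in\FF_2[X,Y]$ is a limit point of $\FF_1[X,Y]$. So for every $r>0$ there is $f_1\in\FF_1[X,Y]$ with $\dist{\OO}(f_1,f_2)<r$.
\end{itemize}
Together these give $\FF_2[X,Y]\subset U_r(\FF_1[X,Y])$ for every $r>0$, hence $\di{H}{\OO}(\FF_1[X,Y],\FF_2[X,Y])=0$.

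One point needs care: Theorem~\ref{theorem: hausd_dist} is applied to approximate maps in both directions, $X\to Y$ and $Y\to X$. Since $\dist{\OO}$ is infinite between maps with different domain or codomain pairs, a map $f_2\in\FF_2(X,Y)$ can only be approximated by elements of $\FF_1(X,Y)$, and likewise for $\FF_2(Y,X)$. So the approximation automatically respects direction, and no extra argument is needed there.

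I expect no real obstacle. The only subtleties are the edge cases.
\begin{itemize}
\item If one of the classes is empty in some direction, density forces the other to be empty in that direction too. Both distances are then infinite and equal by convention.
\item Theorem~\ref{theorem: hausd_dist} is stated with a finite $D$ in mind, but $D=0$ is finite, so it applies directly.
\end{itemize}
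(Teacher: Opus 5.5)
Your proposal is correct and follows the paper's own proof: density gives $\di{H}{\OO}(\FF_1[X,Y],\FF_2[X,Y])=0$, and Theorem~\ref{theorem: hausd_dist} then yields the equality. Your additional points make explicit what the paper's one-line argument leaves implicit:
\begin{itemize}
\item the inclusion $\FF_1[X,Y]\subseteq\FF_2[X,Y]$, which gives the other half of the Hausdorff bound;
\item approximations automatically preserve the direction of the maps;
\item the empty-class case, where both distances are infinite.
\end{itemize}
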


\begin{proof}
  The density of $\FF_1[X,Y]$ in $\FF_2[X,Y]$ means that every map in $\FF_2[X,Y]$ can be approximated arbitrarily closely by maps from $\FF_1[X,Y]$; hence $\di{H}{\OO}(\FF_1[X,Y], \FF_2[X,Y]) = 0$.
  It remains to apply Theorem~\ref{theorem: hausd_dist}.
\end{proof}

\section{The smooth Gromov--Hausdorff distance}
  \begin{define}
    Let $k \in \N \cup \{0, \infty\}$ (for $k = 0$ we have $C^0 = C$).
  Consider the subcategory of $\GH$ (introduced in Definition~\ref{definition F distance})
  whose objects are $C^k$-manifolds (both with and without boundary),
  \emph{equipped with a metric that induces the topology of the manifold} (in other words,
  the identity map $(M,\tau)\to(M,\tau_d)$ is a homeomorphism; this is what it means
  for the metric to be compatible with the topology). Such a metric always exists: a manifold with a
  countable base is paracompact (see Remark~\ref{comm: paracompact}); metrizability of a
  paracompact Hausdorff space with a countable base also follows from Urysohn's
  metrization theorem~\cite{E85}. The morphisms are the $C^k$-maps between these manifolds.
We then call $\dGH{C_k}$ (Definition~\ref{definition F distance}) the $C^k$ \emph{Gromov--Hausdorff distance} between $C^k$-manifolds $M$ and $N$.
  \end{define}
  \begin{theorem}[{\cite[2.6 Theorem]{hirsch1976differential}}]\label{theorem: smooth approximation}
    Let $M$ and $N$ be manifolds of class $C^k$ with $1 \le k \le \infty$; the boundaries $\partial M$ and $\partial N$ may be non-empty.
    Then for any $r < k$ the set $C^k(M,N)$ is dense in $C^r(M,N)$ in the strong topology.
  \end{theorem}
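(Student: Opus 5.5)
We would follow the classical route of Hirsch and first reduce to maps into Euclidean space. Since $N$ is a second countable $C^k$-manifold, Whitney's embedding theorem gives a $C^k$ embedding $N \subset \R^q$ as a closed subset, and $N$ has a $C^{k-1}$ (in fact $C^k$ after smoothing the normal bundle) tubular neighbourhood $W$ with a retraction $\pi\: W \to N$ of class $C^k$. Given $f \in C^r(M,N)$ and a strong neighbourhood of $f$, described by a locally finite family of charts $(U_i,\varphi_i)$ on $M$, compact sets $K_i \subset U_i$ covering $M$, and tolerances $\eps_i > 0$, it then suffices to find $h \in C^k(M,\R^q)$ with $h(M) \subset W$ that is $C^r$-close to $f$ on each $K_i$ within a smaller tolerance $\delta_i$. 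Then $\pi \c h$ is a $C^k$ map into $N$, and the chain rule applied to $\pi$ on a neighbourhood of $f(K_i)$ shows that $\pi \c h$ is $\eps_i$-close to $\pi \c f = f$ in the $C^r$ sense, provided the $\delta_i$ are chosen small enough.

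For the Euclidean case we would use a partition of unity $\{\lambda_i\}$ of class $C^k$ subordinate to $\{U_i\}$; it exists since $M$ is paracompact (Remark~\ref{comm: paracompact}). We would then modify $f$ inductively, one chart at a time. In the chart $\varphi_i$ we convolve $f \c \varphi_i^{-1}$ with a mollifier $\rho_\eta$, obtaining a $C^\infty$ map that is $C^r$-close to $f \c \varphi_i^{-1}$ on a compact set once $\eta$ is small. Set $f_i = f_{i-1} + \lambda_i\,(\rho_\eta * f_{i-1} - f_{i-1})$, with the convolution taken in the chart. Each step makes the map $C^k$ on a neighbourhood of $K_1 \cup \dots \cup K_i$ without destroying smoothness already gained, because a $C^k$ map convolved remains $C^k$. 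Local finiteness means that each point is affected by only finitely many steps, so the limit $h = \lim f_i$ exists, is locally a finite composition of $C^k$ operations, and is $C^k$. Choosing the step-$i$ error below $\delta_j/2^i$ on every $K_j$ meeting $\operatorname{supp}\lambda_i$, which involves finitely many $j$, yields the required $C^r$-closeness everywhere.

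The boundary requires separate treatment. Where a chart of $M$ maps into $\R^n_+$, we first extend $f \c \varphi_i^{-1}$ to a $C^r$ map on an open subset of $\R^n$, using the definition of smoothness at the boundary in the Remark on $C^k$-smoothness at boundary points. We then mollify there and restrict. If $\partial N \ne \0$, we replace $N$ by $N$ union an exterior collar, which is an open manifold, and compose with a $C^k$ map that pushes the collar back into $N$ while being $C^r$-close to the identity near $f(M)$. Alternatively, we first push $f$ slightly into the interior of $N$ along an inward collar.

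We expect the main obstacle to be the bookkeeping in the inductive step: we must verify that the strong $C^r$ estimates survive multiplication by $\lambda_i$ and the chart changes, with constants depending on finitely many derivatives of $\lambda_i$ and of the transition maps on compacta. We would handle this by fixing, before the induction starts, bounds for all these derivatives on each compact set $K_j$ and shrinking $\eta$ accordingly.
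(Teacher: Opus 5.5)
The paper gives no proof of this statement; it quotes Hirsch. Your overall reduction (Whitney embedding, a $C^k$ retraction, approximation into Euclidean space) is a workable alternative. However, the Euclidean step, which carries the whole argument, fails as you wrote it.

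\textbf{The failing step.} With $\{\lambda_i\}$ a partition of unity, the update $f_i=(1-\lambda_i)f_{i-1}+\lambda_i\,(\rho_\eta*f_{i-1})$ is $C^k$ near a point only if $f_{i-1}$ already was $C^k$ there, or if $\lambda_i\equiv 1$ near that point. Unwinding the recursion gives
\begin{equation*}
h=\Bigl(\prod_i(1-\lambda_i)\Bigr)f+(\text{a $C^k$ map}).
\end{equation*}
The product vanishes only where some $\lambda_i$ equals $1$, and a partition of unity need not reach $1$ anywhere on overlaps. For example, take $f(x)=|x|$ on $\R$ and two charts with $\lambda_1=\lambda_2=\tfrac12$ near $0$. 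The limit is then $\tfrac14|x|$ plus a smooth function, which is not $C^1$ at $0$. So your claim that step $i$ makes the map $C^k$ near $K_1\cup\dots\cup K_i$ is false.

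\textbf{Two fixes.} Either works.
\begin{enumerate}
\item Replace the $\lambda_i$ by $C^k$ bump functions with compact support in $U_i$ and $\lambda_i\equiv 1$ on a neighbourhood of $K_i$. These do not form a partition of unity, and this is how the chart-by-chart argument in Hirsch is organized. With this change, your induction, the $2^{-i}$ bookkeeping and the local-finiteness argument go through.
\item Since your target is now Euclidean, drop the induction and set $h=\sum_i\lambda_i\,\bigl(\rho_{\eta_i}*(f\circ\varphi_i^{-1})\bigr)\circ\varphi_i$. This is $C^k$ as a locally finite sum. It is $C^r$-close to $f=\sum_i\lambda_i f$ once each $\eta_i$ is chosen against the derivatives of $\lambda_i$ and of the transition maps on the finitely many $K_j$ meeting $\operatorname{supp}\lambda_i$.
\end{enumerate}

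\textbf{The retraction.} Your parenthetical hides real work. For a $C^k$ submanifold the nearest-point retraction is only $C^{k-1}$, and for $k=1$ it need not exist at all. To get a $C^k$ retraction you need a $C^k$ subbundle complementary to $TN$. That means approximating the normal Gauss map $N\to G_{q-n}(\R^q)$ by a $C^k$ map. This is itself an instance of the theorem, with Euclidean target once the Grassmannian is embedded together with a smooth retraction. So the Euclidean case must be proved first and independently, or the argument is circular.

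\textbf{Comparison with Hirsch.} Hirsch works directly in target charts $(V_i,\psi_i)$ with $f(U_i)\subset V_i$. He modifies $\psi_i f\varphi_i^{-1}$ one chart at a time, and $C^0$-closeness keeps the modified map inside $V_i$. No embedding, tubular neighbourhood or smoothing of the normal bundle is needed. Once repaired, your route buys a linear, global smoothing step (a single sum over a partition of unity). The price is the embedding and retraction machinery, plus a collar to handle $\partial N$.
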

  The book~\cite[p.~35]{hirsch1976differential} gives the following definition of the strong topology on the space of maps between manifolds.
  \begin{define}
    Let $\Phi = \{(\varphi_i, U_i)\}_{i \in A}$ be a locally finite family of charts on $M$ (each point of $M$ has a neighborhood that intersects only finitely many $U_i$), let $K = \{K_i\}_{i \in A}$ be a family of compact sets $K_i \subset U_i$, let $\Psi = \{\psi_i, V_i\}_{i \in A}$ be a family of charts on $N$, and let $\varepsilon = \{\varepsilon_i\}_{i \in A}$ be a family of positive numbers. If $f \in C^r(M,N)$ maps each $K_i$ into $V_i$, we define the \emph{strong basic neighborhood}
    $$
    \mathcal{N}^r(f; \Phi, \Psi, K, \varepsilon)
    $$
    as the set of $C^r$-maps $g \colon M \to N$ such that for all $i \in A$: $g(K_i) \subset V_i$ and
    $$
    \left\| D^\ell(\psi_i f \varphi_i^{-1})(x) - D^\ell(\psi_i g \varphi_i^{-1})(x) \right\| < \varepsilon_i, \qquad x \in \varphi_i(K_i),\ \ell = 0, \dots, r.
    $$
    All such sets form a base of the \emph{strong topology} on $C^r(M,N)$.
  \end{define}
  By Theorem~\ref{theorem: smooth approximation} (with $r=0$) the set $C^k(M,N)$ is dense in $C(M,N)$ in the strong $C^0$-topology. The uniform $\sup$-topology induced by $\dist{\OO}$ is weaker than the strong $C^0$-topology: \emph{every} $\sup$-ball is open in the strong topology (for a point $g$ in the $\sup$-ball of radius $c$ centered at $f$, the strong neighborhood $\mathcal{N}(g, \varepsilon)$ with the constant function $\varepsilon(x) \equiv \tfrac12(c - \sup_z \dist{N}(g(z), f(z)))$ lies entirely in this ball). So density in the strong topology implies density in the $\sup$-topology, and $C^k(M,N)$ is dense in $C(M,N)$ also in the $\sup$-topology. \par
  There is an equivalent definition of the $C^0$ topology between manifolds (see, for example,~\cite[p.~59]{hirsch1976differential}). We write $C_S(M,N)$ for the space $C(M,N)$ with the strong topology.
  \begin{define}
    When the manifold $M$ is paracompact and $N$ is metrizable, a base of the strong topology on $C_S(M,N)$ is formed by all sets of the form
    $$
    \mathcal{N}(f,\varepsilon) = \{ g : d(g(x), f(x)) < \varepsilon(x), \ \text{for all } x \in M \},
    $$
    where $f \in C(M,N)$ and $\varepsilon \in C(M, \mathbb{R}_+)$ are arbitrary.
  \end{define}
  Thus the set of maps $C^k(M,N)$ between the manifolds $M$ and $N$ is dense in the set $C(M,N)$.
  \begin{theorem}
    For any $C^k$-manifolds $M$, $N$ with a countable base, equipped with metrics compatible with the topology, and for all $k \in \N \cup \{0, \infty\}$, one has $\dGH{C}(M,N) = \dGH{C_k}(M,N)$.
  \end{theorem}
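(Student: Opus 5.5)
The plan is to apply Corollary~\ref{corol: close_function_class} with $\FF_1 = C^k$ and $\FF_2 = C = C^0$. The case $k=0$ is trivial, since then $C^k = C$ and the two distances coincide by definition, so assume $1\le k\le\infty$. Because $C^k(M,N)\subseteq C(M,N)$, Proposition~\ref{prop:FFmonotone} already gives $\dGH{C}(M,N)\le\dGH{C_k}(M,N)$. It therefore suffices to show that $\di{H}{\OO}\bigl(C^k[M,N],\,C[M,N]\bigr)=0$, i.e.\ that every continuous map $M\to N$ and every continuous map $N\to M$ lies at $\dist{\OO}$-distance as small as we like from a $C^k$-map between the same pair of spaces. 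Then Theorem~\ref{theorem: hausd_dist} (or directly Corollary~\ref{corol: close_function_class}) yields $|\dGH{C}(M,N)-\dGH{C_k}(M,N)|\le 0$.

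For the density I take $f\in C(M,N)$ and $\eps>0$ and apply Theorem~\ref{theorem: smooth approximation} with $r=0$. This says $C^k(M,N)$ is dense in $C(M,N)$ with the strong $C^0$ topology. Since $M$ is paracompact (Remark~\ref{comm: paracompact}) and $N$ is metrizable by the chosen compatible metric, the description of the strong topology by the sets $\mathcal{N}(f,\varepsilon)$ applies. Taking the constant function $\varepsilon(x)\equiv\eps$ shows that $\mathcal{N}(f,\eps)$ is a nonempty open neighbourhood of $f$. It therefore contains some $h\in C^k(M,N)$, and then $\dist{\OO}(f,h)\le\eps$. The identical argument with $M$ and $N$ swapped handles $C(N,M)$. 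Hence every element of $C[M,N]$ is a $\dist{\OO}$-limit of elements of $C^k[M,N]$, and since $C^k[M,N]\subseteq C[M,N]$ the Hausdorff distance between the two classes is $0$.

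The step needing care is the compatibility between the metric used in $\mathcal{N}(f,\varepsilon)$ and the metric $\dist{N}$ that enters $\dist{\OO}$. The equivalent description of the strong $C^0$ topology holds for any metric inducing the topology of $N$, and the metric in our subcategory induces that topology by definition. So we may use $\dist{N}$ itself in $\mathcal{N}(f,\varepsilon)$, and no comparison of metrics is needed. One must also check that constant $\varepsilon$ is allowed, which is clear because constant positive functions are continuous.

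A minor point is that Corollary~\ref{corol: close_function_class} is phrased with closure equality, whereas the argument only uses $\di{H}{\OO}=0$; for that reason I would invoke Theorem~\ref{theorem: hausd_dist} directly with $D=0$. Finally, when $\dGH{C}(M,N)=\infty$ both sides are infinite, since the estimate $\dis h\le\dis f+2\eps$ of Lemma~\ref{lemma:dis for close functions} applies in both directions, so the equality holds in the extended sense as well.
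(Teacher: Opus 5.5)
Your proposal is correct and follows the paper's proof. Both arguments apply Hirsch's density theorem with $r=0$, pass from strong-$C^0$ density to $\dist{\OO}$-density using sup-type neighbourhoods ($\mathcal{N}(f,\varepsilon)$ with constant $\varepsilon$), and conclude via the Hausdorff-distance result; you invoke Theorem~\ref{theorem: hausd_dist} with $D=0$ where the paper cites Corollary~\ref{corol: close_function_class}, and your separate handling of $k=0$ and of infinite values only makes explicit what the paper leaves implicit.
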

  \begin{proof}
    By Theorem~\ref{theorem: smooth approximation}, the $C^k$ maps from $M$ to $N$ are dense in the class of continuous maps from $M$ to $N$.
    Hence the class of $C^k$ maps between manifolds is dense in the class of continuous maps between manifolds in the $\sup$-topology induced by $\dist{\OO}$. 
    Hence the continuous and the $k$-smooth Gromov--Hausdorff distances coincide by Corollary~\ref{corol: close_function_class}.
  \end{proof}
  \begin{comm}
    The compatibility of the metric with the topology guarantees that the continuous maps between the metric spaces $(M,d_M)$, $(N,d_N)$ are the same as the topologically continuous maps between the manifolds; without this, the density of $C^k$ in $C$ would not carry over to the class of morphisms in $\dGH{C}$. The numerical value of $\dGH{C}(M,N)$ depends on the choice of the compatible metric, while the equality $\dGH{C} = \dGH{C_k}$ does not.
  \end{comm}

\section{\protect\sloppy Partially continuous and partially locally constant Gromov--Hausdorff distances}\label{sec: partially smooth}

  We first define these Gromov--Hausdorff distances between Gromov triples, and then transfer the definition to metric spaces.
  \begin{define}
    A map $f \: (X, \dist{X}, \mu_{X}) \to (Y, \dist{Y}, \mu_{Y})$ is called \emph{a.e.\ locally constant} (respectively \emph{a.e.\ continuous}) if there exists a subset $U \subset \supp(\mu_X)$ of full measure, open in the topology induced on the support $\supp(\mu_X)$, on which $f$ is locally constant (respectively continuous).
    We denote the class of such maps by $Const_{ae}$ (respectively $C_{ae}$).
  \end{define}
  The next definition repeats the formula of Definition~\ref{definition F distance} for Gromov metric triples; the classes $C_{ae}$ and $Const_{ae}$ are not classes of morphisms of a subcategory (see below), so the distances are given explicitly rather than through the categorical construction.
  \begin{define}
    On the class $\GH_{mm}$, the \emph{partially locally constant Gromov--Hausdorff distance} $\dGH{aeconst}$ is the quantity
    $$
    \dGH{aeconst}(X,Y)=\frac12\,\inf_{\substack{f\in Const_{ae}(X,Y)\\ g\in Const_{ae}(Y,X)}}\,\dis R_{f,g}.
    $$
    The \emph{partially continuous Gromov--Hausdorff distance} $\dGH{aec}$ is the quantity
    $$
    \dGH{aec}(X,Y)=\frac12\,\inf_{\substack{f\in C_{ae}(X,Y)\\ g\in C_{ae}(Y,X)}}\,\dis R_{f,g}.
    $$
    The infimum over the empty set is again taken to be infinity.
  \end{define}
  The triangle inequality does not follow directly from Definition~\ref{definition F distance}, as in the case of the general $\FF$-distance, for two reasons. First, the class $C_{ae}$ is not closed under composition: the composition of a.e.\ continuous maps can have continuum many points of discontinuity (for example, let $K \subset [0,1]$ be a Cantor set of positive Lebesgue measure, take a continuous $f$ that vanishes exactly on $K$, say $f(x) = \rho(x, K) = \inf_{k \in K} d(x,k)$, and let $g(t) = 0$ for $t = 0$ and $g(t) = 1$ for $t > 0$, discontinuous only at zero; then $g \c f$ is discontinuous at every point of the set $K$, which has positive measure), so $g \c f$ does not lie in the class $C_{ae}$.
   Second, the class $Const_{ae}$, although closed under composition (this follows from the first map being constant on each piece), does not contain all identity maps, since $\id$ is locally constant only on discrete spaces. We prove the triangle inequality from the following theorem:
  \begin{theorem}~\label{theorem: aec equal original}
    The partially locally constant Gromov--Hausdorff distance, the partially continuous Gromov--Hausdorff distance and the Gromov--Hausdorff distance coincide on the class $\GH_{mm}$. 
    In other words, $\dGH{aec} = \dGH{aeconst} = \dGH{}$.
  \end{theorem}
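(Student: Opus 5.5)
The first step is to reduce everything to a single inequality. Since $Const_{ae}(X,Y)\subseteq C_{ae}(X,Y)\subseteq\OO(X,Y)$, and the infimum over a smaller class can only be larger (the same reasoning as in Proposition~\ref{prop:FFmonotone}), Proposition~\ref{prop: gh via maps} gives
$$
\dGH{}\le\dGH{aec}\le\dGH{aeconst}.
$$
So it suffices to prove $\dGH{aeconst}(X,Y)\le\dGH{}(X,Y)$. Fix arbitrary maps $f\colon X\to Y$ and $g\colon Y\to X$ and a number $\eps>0$. I will build $f'\in Const_{ae}(X,Y)$ and $g'\in Const_{ae}(Y,X)$ with $\dis R_{f',g'}\le\dis R_{f,g}+C\eps$ for an absolute constant $C$. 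Letting $\eps\to0$ and then taking the infimum over $(f,g)$ finishes the proof.

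The construction uses a partition of the support into small open pieces. Let $S=\supp\mu_X$ and pick a countable dense set $\{c_n\}\subset S$. For every $n$, the function $r\mapsto\mu_X(B_r(c_n))$ is finite and monotone, because $\mu_X$ is boundedly finite. It therefore has at most countably many jumps, so I can choose $r_n\in(\eps/2,\eps)$ with $\mu_X\{x:|xc_n|=r_n\}=0$. Set
$$
U_n=S\cap\Bigl(U_{r_n}(c_n)\setminus\textstyle\bigcup_{m<n}B_{r_m}(c_m)\Bigr).
$$
Each $U_n$ is relatively open in $S$, the $U_n$ are pairwise disjoint, and each has diameter less than $2\eps$. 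By density of $\{c_n\}$ the open balls cover $S$. Hence $S\setminus\bigcup_n U_n$ lies in the countable union of the spheres $\{|xc_n|=r_n\}$, which is $\mu_X$-null. Thus $U=\bigcup_nU_n$ is relatively open in $S$ and of full measure.

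For each nonempty $U_n$, choose a point $x_n\in U_n$. Define $f'(x)=f(x_n)$ for $x\in U_n$ and $f'(x)=f(x)$ for $x\notin U$. Then $f'$ is locally constant on $U$, so $f'\in Const_{ae}$. I will not try to bound $\dist{\OO}(f,f')$, which may be large because $f$ is arbitrary; Theorem~\ref{theorem: hausd_dist} is therefore not applicable here. Instead I compare correspondences directly. Every pair $(x,f'(x))$ with $x\in U_n$ is obtained from the pair $(x_n,f(x_n))\in R_{f,g}$ by moving the $X$-coordinate by less than $2\eps$. The same construction on $Y$ gives $g'$, and then every element of $R_{f',g'}$ is within $2\eps$, in one coordinate, of an element of $R_{f,g}$. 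By the triangle inequality, $\dis R_{f',g'}\le\dis R_{f,g}+4\eps$.

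The main obstacle I expect is the existence of a countable dense subset of $\supp\mu_X$, that is, separability of the support. For an arbitrary boundedly finite Borel measure on a nonseparable metric space this is delicate. My first attempt is to show that the support is separable: for each $\delta>0$, a $\delta$-separated subset of $\supp\mu_X$ lying inside a fixed ball yields disjoint balls of positive measure inside a set of finite measure, so it must be countable. The difficulty is that this step needs countable additivity over an uncountable disjoint family of open sets, which I would try to obtain from $\tau$-additivity or by passing to the $\mu_X$-measurable open hull. If this fails, the fallback is to take an uncountable well-ordered $\eps$-net and choose the radii so that the union of boundary spheres is still null. This would need a more careful measure-theoretic argument, and it is the step I would check first.
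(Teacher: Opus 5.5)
Your proof is correct. Its core device is the paper's: a countable dense subset of the support, radii chosen in an interval so that the spheres are $\mu$-null, and pieces obtained by deleting earlier balls, on which the map is made constant. The difference is in how the distortion is controlled.

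\emph{The paper's route.} It passes to the nets $X_\eps=X^{ns}\cup X^s_\eps$ and $Y_\eps$ and takes a near-optimal correspondence $R$ between them. It thickens $R$ to $\vR=\bigcup_{(x,y)\in R}B_{2\eps}(x)\times B_{2\eps}(y)$. It then chooses the constant values in $R(x_i)$ so that $R_{f,g}\subseteq\vR$, losing $13\eps$.

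\emph{Your route.} You start from an arbitrary pair $(f,g)$ (Proposition~\ref{prop: gh via maps}) and replace $f$ on each piece by its value at a representative point. Each element of $R_{f',g'}$ differs from an element of $R_{f,g}$ in one coordinate by less than $2\eps$, so you get $\dis R_{f',g'}\le\dis R_{f,g}+4\eps$ directly.

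What your version buys:
\begin{itemize}
\item It avoids the nets and the thickened correspondence.
\item Your pieces $U_n$ are open from the start, so there is no passage to interiors of the closed sets $\vB^X_i$.
\item It gives a slightly stronger statement: every pair of maps, not just a near-optimal one, is approximated in distortion by a pair of a.e.\ locally constant maps.
\end{itemize}

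The obstacle you flagged, separability of the support, is not real. The paper just cites it (Theorem~\ref{lemma: separable support}), but your own argument closes it using only finite additivity. Let $A\subset\supp\mu_X\cap B_R(p)$ be $\delta$-separated. Then the balls $U_{\delta/2}(a)$, $a\in A$, are pairwise disjoint, have positive measure, and lie in $B_{R+\delta}(p)$, which has finite measure $M$. Fewer than $kM$ of them can have measure $>1/k$, so $A$ is a countable union of finite sets. Maximal $\delta$-separated sets for $\delta=1/m$ and $R=1,2,\dots$ then give a countable dense subset. No uncountable additivity and no fallback are needed.

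The point where $\tau$-additivity genuinely enters is elsewhere. You call $U$ of full measure after proving only $\mu_X(S\setminus U)=0$. This tacitly uses $\mu_X(X\setminus\supp\mu_X)=0$, or reads ``full measure'' relative to the support. The paper does the same: it only shows $\mu(X^s\setminus U)=0$.
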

  \begin{proof}
    For the proof we need the following lemmas:

    \begin{lemma}[\cite{rudin1976pma}]~\label{lemma: monotonic_functions}
      Let $f\: \R \to \R$ be a monotone function.
Then it has at most countably many points of discontinuity, and all of them are non-removable discontinuities of the first kind.
    \end{lemma}

    \begin{theorem}[\cite{SEPSUPP}]~\label{lemma: separable support}
      Every boundedly finite Borel measure has a separable support.
    \end{theorem}

    \begin{lemma}\label{lemma: ae spheres}
      Let $X$ be a metric space with a boundedly finite Borel measure $\mu$.
Then for any point $x \in X$ at most countably many spheres $S(x,r)$ can have non-zero measure.
    \end{lemma}
    \begin{proof}

    Consider the function $r \to  \mu(B(x,r))$.
    It is monotone and, by Lemma~\ref{lemma: monotonic_functions}, has at most countably many points of discontinuity.
    Since $\mu(S(x,r)) = \mu(B(x,r)) - \lowlim{p \to r-} \mu(B(x,p))$, the sphere has zero measure at every point of continuity in $r$.
    All values of $r$ are such, except possibly countably many, which proves the claim.
    \end{proof}
    We now prove the main theorem.
    Since a.e.\ locally constant maps are a.e.\ continuous, $\dGH{} \leq \dGH{aec} \leq \dGH{aeconst}$.
    So it is enough to prove that the partially locally constant Gromov--Hausdorff distance coincides with the Gromov--Hausdorff distance. \par
    Let $X$ and $Y$ be two metric spaces; we write $X^s$ and $Y^s$ for the supports of the measures of $X$ and $Y$, respectively.
    To simplify the notation, we write $\mu$ for the measure on each of the spaces.
    If $\dGH{}(X,Y) = \infty$, then the equality holds ($\dGH{aeconst} \geq \dGH{}$).
    Now let $d := \dGH{}(X,Y) < \infty$.\par
    
    For an arbitrary $\eps > 0$, take at most countable $\eps$-nets $X^s_{\eps}$ and $Y^s_\eps$ in $X^s$ and $Y^s$ (they exist, since the supports $X^s$, $Y^s$ are separable by Theorem~\ref{lemma: separable support}). 
    Set $X^{ns} = X \backslash \supp \mu$, $Y^{ns} = Y \backslash \supp \mu$, $X_\eps = X^{ns} \cup X^s_{\eps}$,  $Y_\eps = Y^{ns} \cup Y^s_{\eps}$.
    Since $X_\eps$ and $Y_{\eps}$ are $\eps$-nets, the triangle inequality gives
    $$
    \dGH{}(X_{\eps},Y_{\eps}) \leq  \dGH{}(X_{\eps},X) + \dGH{}(X,Y) + \dGH{}(Y,Y_{\eps}) \leq d + 2\eps.
    $$
    By Proposition~\ref{theorem:main_formula}, there exists a correspondence $R \in \RR(X_{\eps}, Y_{\eps})$ with $\dis(R) \leq 2d + 5\eps$.
    Number the elements of the nets: $X^s_\eps = \{x_i\}$, $Y^s_\eps = \{y_i\}$.
    Consider the balls $B^X_{r}(x_i)$, $B^Y_{r}(y_i)$, and also $B^{X^s}_{r}(x_i) = B^X_{r}(x_i) \cap X^s$ and $B^{Y^s}_{r}(y_i) = B^Y_{r}(y_i) \cap Y^s$.
    The closed balls (of the spaces $X^s$ and $Y^s$) $B^{X^s}_{r}(x_i)$ and $B^{Y^s}_{r}(y_i)$ cover $X^s$ and $Y^s$, respectively, for $r \geq \eps$. 

    We build a new correspondence $\vR$: 
    $$
    \vR = \cup_{(x,y)\in R} B^X_{2\eps}(x) \x B^Y_{2\eps}(y).
    $$
    It is indeed a correspondence, since the balls $B^X_{2\eps}(x)$ and $B^Y_{2\eps}(y)$ cover the whole spaces $X$ and $Y$.\par

    For every pair $(\vx,\vy) \in \vR \backslash R$ there exists a pair $(x,y) \in R$ with $\max(|x \vx|, |y \vy|) \leq 2\eps$.
    Each of the four points of the two pairs being compared moves by at most $2\eps$ when passing from $R$ to $\vR$, so the difference of the distances grows by at most $8\eps$, that is, $\dis(\vR) \leq \dis(R) + 8\eps \leq 2\dGH{}(X,Y) + 13\eps$. 
    Then for every $\vR_c \subseteq \vR$ we have $\dis(\vR_c) \leq 2\dGH{}(X,Y) + 13\eps$.
    We need to build maps $f \: X \to Y$, $g \: Y \to X$ that are continuous on an open subset of the support of full measure and such that the correspondence $R_{f, g}$ is a subcorrespondence of $\vR$; then the theorem will follow.\par

    We build the map $f$; the map $g$ is built in the same way.
    For each $i$ there exists $r_i$ such that $\eps < r_i < 2\eps$ and $\mu(S(x_i,r_i)) = 0$: by Lemma~\ref{lemma: ae spheres} the ``bad'' radii form an at most countable set, while the interval $(\eps, 2\eps)$ is uncountable.
    We build the sequence of sets $\vB^X_i = B^{X^s}(x_i, r_i) \backslash \underset{j < i}{\cup}U^{X^s}(x_j, r_j)$.
    These are closed sets, and the measure of their boundary is zero (since it lies in an at most countable union of boundaries of zero measure).
    Since the balls $B^{X^s}(x_i, r_i)$ cover $X^s$, the union of the sets $\vB^X_i$ also coincides with $X^s$.
    The set $U := \bigcup_i \bigl(\operatorname{Int}_{X^s}(\vB^X_i) \setminus \bigcup_{j < i} \vB^X_j\bigr)$ is open in the induced topology of the support $X^s$ (each term is the difference of an open set and a finite union of closed sets, hence open) and has full measure, since $X^s \setminus U \subset \bigcup_i \partial_{X^s}(\vB^X_i)$ and the measure of the boundaries is zero.\par
    Define the map $f$ on $X^s$: choose one element $y_i$ from each set $R(x_i)$ and set $f(x) = y_{i(x)}$, where $i(x)$ is the smallest index $i$ with $x \in \vB^X_i$ (the union of the sets $\vB^X_i$ coincides with $X^s$, so $i(x)$ is defined for all points). On each set $\operatorname{Int}_{X^s}(\vB^X_i) \setminus \bigcup_{j < i}\vB^X_j$ the map $f$ is constant, hence continuous.
    For a point $x \in X^{ns} = X \setminus X^s$ set $f(x)$ equal to an arbitrary element of $R(x)$ (this set is non-empty, since $R$ is a correspondence on $X_\eps \supset X^{ns}$).

    Thus we have built maps $f$ and $g$, continuous on open subsets of the supports of full measure, with the following property: $\dis R_{f,g} \leq 2\dGH{}(X,Y) + 13 \eps$. Since $\eps$ is arbitrary, we get the result.
  \end{proof}
  \begin{comm}
    In fact, the maps between the sets $\vB^X_i$ can be arbitrary, not necessarily constant.
  \end{comm}
  
  \begin{proposition}~\label{proposition: no depend of measure}
    The partially locally constant Gromov--Hausdorff distance and the partially continuous Gromov--Hausdorff distance are generalized pseudometrics on $\GH_{mm}$.
    Moreover, the value of the partially continuous or partially locally constant Gromov--Hausdorff distance between triples $(X, \dist{X}, \mu_{X})$ and $(Y, \dist{Y}, \mu_{Y})$ does not depend on the measures $\mu_{X}$ and $\mu_{Y}$.
  \end{proposition}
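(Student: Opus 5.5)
The natural route is to reduce everything to Theorem~\ref{theorem: aec equal original}. That theorem says that for all $X, Y \in \GH_{mm}$ we have
$$
\dGH{aec}(X,Y) = \dGH{aeconst}(X,Y) = \dGH{}(X,Y),
$$
where the right-hand side is the classical Gromov--Hausdorff distance between the underlying metric spaces $(X,\dist{X})$ and $(Y,\dist{Y})$. So I would first record that equality and then derive both claims of the proposition from properties of $\dGH{}$ alone. No separate argument about composing a.e.\ continuous maps is needed; this matters because $C_{ae}$ is not closed under composition and $Const_{ae}$ does not contain the identities, as noted above.

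\emph{Pseudometric axioms.} The function $\dGH{}$ is defined through realizations, or equivalently via Proposition~\ref{theorem:main_formula}, and is a generalized pseudometric on metric spaces. It vanishes on the diagonal, since the identity correspondence has zero distortion. It is symmetric, since transposing a correspondence preserves distortion. It satisfies the triangle inequality, by composing correspondences $R_1 \in \RR(X,Y)$ and $R_2 \in \RR(Y,Z)$: then $R_2 \c R_1 \in \RR(X,Z)$ and $\dis(R_2\c R_1) \le \dis R_1 + \dis R_2$. Infinite values are allowed. Transporting these three properties through the equality above gives them for $\dGH{aec}$ and $\dGH{aeconst}$ on $\GH_{mm}$.

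\emph{Independence of the measure.} In the right-hand side $\dGH{}(X,Y)$ only the metrics $\dist{X}$ and $\dist{Y}$ appear, and the measures $\mu_X, \mu_Y$ enter nowhere. Hence, for two triples with the same underlying metric spaces and different measures, the values of $\dGH{aec}$ and of $\dGH{aeconst}$ are equal.

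The only point needing care is that Theorem~\ref{theorem: aec equal original} really holds for every Gromov triple, including degenerate cases. If $\mu$ is zero, then $X^s = \0$ and the map is arbitrary. If $\dGH{}(X,Y)=\infty$, then all three quantities are infinite. I would check that the proof above covers these cases: an empty support gives empty nets, and $X_\eps = X^{ns}$. Granting this, the proposition follows immediately.
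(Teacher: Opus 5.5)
Your proposal is correct and follows the paper's proof. Both reduce, via Theorem~\ref{theorem: aec equal original}, to the fact that the classical distance $\dGH{}$ between the underlying metric spaces is a generalized pseudometric in which the measures play no role; your explicit checks of the pseudometric axioms via correspondences and of the degenerate cases (zero measure, infinite distance) are sound, and they spell out what the paper states in one line.
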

  \begin{proof}
    By Theorem~\ref{theorem: aec equal original}, both distances coincide with the classical Gromov--Hausdorff distance computed for the metric spaces underlying the triples, which is a generalized pseudometric independent of the choice of the measures.
  \end{proof}
  Note that every metric space admits a boundedly finite Borel measure, for example a Dirac measure or a countable linear combination of Dirac measures whose support intersects every ball in a finite set.
  \begin{define}
    The partially locally constant (resp.\ partially continuous) Gromov--Hausdorff distance between \emph{metric spaces $(X, \dist{X})$ and $(Y, \dist{Y})$} is the partially locally constant (resp.\ partially continuous) distance between the metric triples $(X, \dist{X}, \mu_{X})$ and $(Y, \dist{Y}, \mu_{Y})$ for arbitrary boundedly finite Borel measures $\mu_{X}$ and $\mu_{Y}$.
  \end{define}
  By Proposition~\ref{proposition: no depend of measure}, this definition is well defined. Definition~\ref{def: dGH C n} and its examples are placed in Appendix~\ref{sec: appendix}.

\appendix
\section{Gromov--Hausdorff distances with discontinuities}\label{sec: appendix}

  In the introduction we saw that for the pair $\Delta_2$, $[0,\, 2]$ allowing one point of discontinuity is enough for the difference between the continuous and the classical distances to disappear.
  In general this is not so: in the example below, for the vertices of the cube ${[0,\, 1]}^I$ and its skeleton the distance $\dGH{C,\alpha}$ is equal to $1$ for all $\alpha < 2^{|I|-2}$, while the classical one does not exceed $\frac12$.
  In particular, for finite $|I| \ge 3$ the distances differ even for $\alpha = 1$, and for infinite $I$ they differ for all $\alpha < 2^{|I|}$.

  \begin{define}\label{def: dGH C n}
    Let $\alpha$ be a cardinal.
    A map is called \emph{$\alpha$-discontinuous} if the set of its points of discontinuity has cardinality at most $\alpha$.
    The \emph{Gromov--Hausdorff distance with $\alpha$ discontinuities} is the quantity
    $$
    \dGH{C,\alpha}(X,Y)=\frac12\,\inf \dis R_{f,g},
    $$
    where the infimum is taken over all pairs of $\alpha$-discontinuous maps $f \: X \to Y$ and $g \: Y \to X$.
    For $\alpha = 0$ we get the continuous distance $\dGH{C}$.
  \end{define}

  \begin{example}\label{example: cube skeleton}
    Let $I$ be an arbitrary set of cardinality at least two.
    Let $\S_I$ denote the skeleton of the cube ${[0,\, 1]}^I$ with the intrinsic metric $\tilde d$, and let $V_I = {\{0, 1\}}^I$ be the set of its vertices; set $d = \min(\tilde d,\, 2)$ and consider both spaces with the metric $d$.
    Then $\dGH{}(V_I, \S_I) \le \frac12$, $\dGH{C,\alpha}(V_I, \S_I) = 1$ for all $\alpha < 2^{|I|-2}$, and $\dGH{C,\alpha}(V_I, \S_I) \le \frac12$ for all $\alpha$ at least the number of edges of the skeleton.
  \end{example}
  \begin{proof}
    First of all, $d$ is a metric: for $a, b \ge 0$ one has $\min(a + b,\, 2) \le \min(a,\, 2) + \min(b,\, 2)$, so the triangle inequality is preserved, and the other axioms are obvious.
    The distance $\tilde d$ between vertices is the number of coordinates in which they differ: a path along edges changes the coordinates one by one.
    Hence distinct vertices are at distance at least $1$ from each other, so the connected subsets of $V_I$ are one-point sets.
    Vertices that differ in exactly two coordinates are at distance $2$ from each other; together with $d \le 2$ this gives $\diam V_I = \diam \S_I = 2$.
    A two-dimensional face of the cube is a square in which all coordinates except two are fixed by the values $0$ or $1$; the boundary of such a face, its four edges, is connected and contains opposite vertices at distance $2$.

    \emph{Lemma.}
    Let a map $\phi \: \S_I \to V_I$ be continuous on a connected set $C \subseteq \S_I$ containing vertices $v, w$ with $d(v, w) = 2$.
    Then $\phi|_C$ is constant, and every correspondence containing the graph of $\phi$ has distortion $2$.
    Indeed, the restriction $\phi|_C$ is continuous, its image is a connected subset of $V_I$, hence a one-point set; therefore the pairs $(\phi(v), v)$ and $(\phi(w), w)$ contribute the distortion $d(v, w) = 2$, and the distortion cannot be larger than $2$, since distances in both spaces do not exceed $2$.

    We now turn to the estimates.
    In (i) we prove the upper bound for the modified distance for any $\alpha$ at least the number of edges of the skeleton, and derive from it the bound for the classical one: the classical distance does not exceed the modified one.
    In (ii) we prove the lower bound for the modified distance: it follows from the lemma applied to a face whose boundary contains no points of discontinuity.

    (i) We prove the upper bounds.
    Take $f$ to be the inclusion $V_I \subseteq \S_I$, and take $g$ to be the map $h \: \S_I \to V_I$ sending each point of an edge to the nearest vertex of this edge (at the midpoint of an edge, to either of the two).
    In each pair $(x, y)$ of the correspondence $R_{f,g}$ the distance $d(x, y)$ does not exceed $\frac12$, so by the triangle inequality $\dis R_{f,g} \le 1$.
    The map $h$ is discontinuous exactly at the midpoints of the edges, one per edge, and $f$ is continuous, so for any $\alpha$ at least the number of edges of the skeleton, $\dGH{C,\alpha}(V_I, \S_I) \le \frac12$.
    The classical distance does not exceed the modified one, hence $\dGH{}(V_I, \S_I) \le \frac12$.

    (ii) Let $\alpha < 2^{|I|-2}$ and let $g \: \S_I \to V_I$ be continuous outside a set $F$ with $|F| \le \alpha$.
    We prove that every correspondence containing the graph of $g$ has distortion $2$ by applying the lemma to the boundary of a suitable two-dimensional face.
    Take any distinct coordinates $i, j \in I$ and consider the two-dimensional faces with free coordinates $i$ and $j$: for each $\sigma \in {\{0, 1\}}^{I \setminus \{i, j\}}$ set
    $$
    Q_\sigma = \bigl\{x \in {[0,\, 1]}^I : x_k = \sigma_k \text{ for all } k \in I \setminus \{i, j\}\bigr\}.
    $$
    There are exactly $2^{|I|-2}$ such faces (for infinite $I$ this is $2^{|I|}$); the boundary $\partial Q_\sigma$ is the four edges of the square $Q_\sigma$.
    All these faces are parallel --- they have the same free coordinates --- and are pairwise disjoint.
    In particular, each point of discontinuity belongs to at most one face, so the points of discontinuity lie on the boundaries of at most $|F| \le \alpha$ faces; the total number of faces is $2^{|I|-2} > \alpha$, so there is a face $Q$ with $\partial Q \cap F = \emptyset$.
    The boundary of $Q$ is connected and contains opposite vertices at distance $2$; the map $g$ is continuous at every point of it, and by the lemma every correspondence containing the graph of $g$ has distortion $2$, whence $\dGH{C,\alpha}(V_I, \S_I) = 1$.
  \end{proof}

\printbibliography

\end{document}